\theoremstyle{plain}
\newtheorem{theorem}{Theorem}[section]
\newtheorem{conjecture}[theorem]{Conjecture}
\theoremstyle{remark}
\newcommand{\eps}{\ensuremath{\varepsilon}}
\def\N{\mathbb{N}}
\let\emptyset\varnothing
\let\originalleft\left
\let\originalright\right
\renewcommand{\left}{\mathopen{}\mathclose\bgroup\originalleft}
\renewcommand{\right}{\aftergroup\egroup\originalright}
\def\imod#1{\allowbreak\mkern10mu({\operator@font mod}\,\,#1)}
\begin{document}
\title{Disproportionate division}

\author{Logan Crew}
\address{Department of Mathematics, University of Pennsylvania, Philadelphia, PA 19104, USA}
\email{crewl@math.upenn.edu}

\author{Bhargav Narayanan}
\address{Department of Mathematics, Rutgers University, Piscataway, NJ 08854, USA}
\email{narayanan@math.rutgers.edu}

\author{Sophie Sprikl}
\address{Department of Mathematics, Rutgers University, Piscataway, NJ 08854, USA}
\email{sophie.spirkl@rutgers.edu}

\date{5 September 2019}
\subjclass[2010]{Primary 05D05; Secondary 91B32}

\begin{abstract}
	We study the disproportionate version of the classical cake-cutting problem: how efficiently can we divide a cake, here $[0,1]$, among $n$ agents with different demands $\alpha_1, \alpha_2, \dots, \alpha_n$ summing to $1$? When all the agents have equal demands of $\alpha_1 = \alpha_2 = \dots = \alpha_n = 1/n$, it is well-known that there exists a fair division with $n-1$ cuts, and this is optimal. For arbitrary demands on the other hand, folklore arguments from algebraic topology show that $O(n\log n)$ cuts suffice, and this has been the state of the art for decades. Here, we improve the state of affairs in two ways: we prove that disproportionate division may always be achieved with $3n-4$ cuts, and give an effective combinatorial procedure to construct such a division. We also offer a topological conjecture that implies that $2n-2$ cuts suffice in general, which would be optimal.
\end{abstract}

\maketitle

\section{Introduction}
The cake-cutting problem, whose study was initiated by Banach and Steinhaus~\citep{steinhaus} in 1949, is a classical measure partitioning problem concerned with the division of a `cake', here the unit interval $[0,1]$, amongst $n \ge 2$ agents each with their own `utilities', here non-atomic Borel probability measures $\mu_1, \mu_2, \dots, \mu_n$ on $[0,1]$.

Traditionally, one is interested in \emph{fair divisions}: a partition $X_1 \cup X_2 \dots \cup X_n$ of $[0,1]$ is said to be a \emph{fair division} if $\mu_i (X_i) \ge 1/n$ for all $1 \le i \le n$. That a fair division always exists is a classical fact, and is more or less trivial to see. Slide a knife from $0$ to $1$, stopping at the first point $x \in [0,1]$ where $\mu_i ([0,x)) \ge 1/n$ for some $1 \le i \le n$, set $X_i = [0,x)$, and recurse on $[x,1]$ with the remaining $n-1$ agents; this in fact shows us that there is a fair division with $n-1$ cuts, which is optimal since $n-1$ cuts are necessary just to partition the unit interval into $n$ pieces. There is now a significant body of work on fair divisions, investigating various aspects of this problem such as envy-free-ness, equitability and computational complexity, and utilising a wide variety of tools from combinatorics, topology and game theory; for a small sample of the literature, see~\citep{dubspan, strom, webb, better, envy1, envy2}.

Our focus here is a natural `disproportionate' generalisation of the fair division problem: what can we say when the $n$ agents have differing claims to the cake? In the measure partitioning literature, the disproportionate problem often presents subtleties not inherent in the proportionate problem (notably in~\citep{stonetukey, imre}, for example), and as we shall shortly see, this is also the case here.

Given non-negative \emph{demands} $\alpha_1, \alpha_2, \dots, \alpha_n$ summing to $1$, a \emph{disproportionate division} for these demands is a partition $X_1 \cup X_2 \dots \cup X_n$ of $[0,1]$ with $\mu_i (X_i) \ge \alpha_i$ for all $1 \le i \le n$. The requirement in this definition that all the demands sum to $1$ is sensible since the measures $\mu_1, \mu_2, \dots, \mu_n$ are, in general, allowed to be identical.

Disproportionate divisions are easily seen to exist when all the demands are rational: bring all the demands to a common denominator, say $D$, construct a fair division for $D$ agents, and then distribute the resulting pieces appropriately. However, this does not immediately demonstrate existence for irrational demands, since taking limits with respect to the Hausdorff metric, for example, does not preserve measure. Nonetheless, it can be shown using an infinite analogue of the sliding knife procedure that disproportionate divisions exist for all demands, though when proceeding thusly, one cannot in general avoid `crumbs': the pieces of cake produced by this argument are only guaranteed to be countable unions of intervals.

What if we ask for an \emph{efficient} disproportionate division with a \emph{bounded} number of cuts? A beautiful topological result of Stromquist and Woodall~\citep{sw} furnishes an answer; a folklore argument utilising this result shows that a disproportionate division for $n$ agents with arbitrary demands may always be found with $O(n^2)$ cuts, and a more efficient rendition of this argument, recently discovered by Segal-Halevi~\citep{halevi}, shows that in fact $O(n\log n)$ cuts always suffice. Our main result improves on these decades-old topological arguments as follows.

\begin{theorem}\label{mainthm}
	For all $n\ge 2$, given non-atomic probability measures $\mu_1, \mu_2, \dots, \mu_n$ on $[0,1]$ and non-negative reals $\alpha_1, \alpha_2, \dots, \alpha_n$ summing to $1$, there exists a disproportionate division for these demands with at most $3n-4$ cuts.
\end{theorem}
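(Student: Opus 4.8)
The plan is to prove Theorem~\ref{mainthm} by induction on $n$, establishing the slightly stronger statement in which the demands are only required to satisfy $\sum_i \alpha_i \le 1$ rather than $\sum_i \alpha_i = 1$; the slack is what the recursion will consume. The base case $n=2$ asks for a disproportionate division of $[0,1]$ between two agents using at most $2 = 3\cdot 2 - 4$ cuts, which I would obtain by an elementary sweeping argument: sliding a single knife and tracking the two increasing quantities $\mu_1([0,x])$ and $\mu_2([0,x])$, one checks via the intermediate value theorem that among the four candidate patterns -- the two one-cut partitions, and the two prefix-plus-suffix partitions in which one agent receives $[0,x]\cup[y,1]$ and the other receives $[x,y]$ -- at least one simultaneously meets both demands.

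For the inductive step I would reduce an instance on $n$ agents to one on $n-1$ agents at a cost of at most three cuts, so that writing $f(n)$ for the worst-case number of cuts gives $f(n)\le f(n-1)+3$, and hence $f(n)\le f(2)+3(n-2)=3n-4$. The engine of the reduction is an ``onion-peeling'' move: I single out one agent $i$, award it a piece $P_i$ that is a union of at most two intervals anchored at the ends of the current interval $I$, so that the remaining live region $I'=I\setminus P_i$ is again a single interval, and then recurse on $I'$ with the remaining $n-1$ agents, their original demands, and the measures $\mu_j$ restricted to $I'$. The natural invariant to carry through is the feasibility condition $\sum_j \alpha_j/\mu_j(I)\le 1$ on the current live interval $I$; equivalently, after renormalising each $\mu_j$ to a probability measure on $I$, the demands expressed as fractions of each agent's own mass sum to at most one. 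Under this condition the relaxed instance is solvable by the infinite sliding-knife procedure recalled in the introduction, so that maintaining it is exactly what guarantees we never reach an impossible subproblem.

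The crux, and the step I expect to be the main obstacle, is preserving this invariant across a peel. Normalising so that $\mu_j(I)=1$, giving agent $i$ a piece with $\mu_i(P_i)=\alpha_i$ frees a budget of $\alpha_i$, but shrinking $I$ to $I'$ inflates every remaining demand fraction from $\alpha_j$ to $\alpha_j/\mu_j(I')$; a short computation shows that feasibility on $I'$ is maintained as soon as the total inflation $\sum_{j\ne i}\alpha_j\,\mu_j(P_i)/\mu_j(I')$ stays below the freed budget $\alpha_i$. I would locate a valid peel by sweeping the one-parameter family of prefix-plus-suffix pieces $P_i=[0,x]\cup[y,1]$ of fixed $\mu_i$-mass $\alpha_i$, applying the intermediate value theorem to balance inflation against budget, and by selecting the peeled agent $i$ extremally -- for instance the agent whose demanded mass sits most concentrated near an endpoint -- so that its departure releases enough slack for the rest. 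The delicate point is that a single free parameter cannot in general pin down all $n-1$ competing ratios $\mu_j(P_i)/\mu_j(I')$ at once: this is precisely where the clean two-cut peel that would give the conjectured $2n-2$ bound can fail. The third cut is what I would spend to buy robustness, relaxing the requirement that the live region stay a single interval so as to gain the extra degree of freedom needed to force the inflation below budget, and then amortising the resulting second live interval into the following peel; making this bookkeeping both valid and tight -- thereby explaining the gap between the provable $3n-4$ and the conjectural optimum $2n-2$ -- is the technical heart of the argument.
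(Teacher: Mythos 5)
Your overall architecture differs from the paper's, and the place where it differs is exactly where your argument has a genuine gap. You reduce $n$ agents to $n-1$ by ``peeling'' one agent off with at most three cuts while preserving the feasibility invariant $\sum_j \alpha_j/\mu_j(I') \le 1$ on the shrunken live region, giving $f(n) \le f(n-1)+3$. But the existence of such a peel is never established: you correctly observe that a one-parameter sweep of end-anchored pieces $[0,x]\cup[y,1]$ cannot control all $n-1$ inflation ratios $\mu_j(P_i)/\mu_j(I')$ simultaneously, and then defer the resolution (``spend the third cut to buy robustness,'' ``amortise the resulting second live interval into the following peel'') without saying what is actually cut where, how the extremal choice of the peeled agent is made, or why the inflation can always be forced below the freed budget $\alpha_i$. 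Moreover, once the live region is allowed to become a disjoint union of two intervals, your inductive hypothesis (which concerns a single interval) no longer applies as stated, and the cut-counting for the amortisation is not carried out. This is not a routine detail: the paper's authors explicitly note that they were unable to make any argument of the form ``partition the agents into two disjoint groups, including the group of size one, at $O(1)$ cuts per split'' go through, so the missing step is the crux rather than bookkeeping.

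The paper's proof sidesteps this obstruction with a different mechanism that you may want to compare against. Rather than removing one agent cleanly, it makes a \emph{single} cut at a point $x$ chosen by a sliding-knife/minimality argument, and splits the agents into two groups that \emph{share} one distinguished agent $t$, whose demand $\alpha_t$ is split as $\alpha' + \alpha''$ between the two sides; this yields subproblems of sizes $k$ and $n+2-k$ (summing to more than $n+1$) and the recursion $f(n+1) \le 1 + f(k) + f(n+2-k)$, which with $f(2)\le 2$ gives $3n-4$. The extra degrees of freedom you were trying to buy with a third cut are instead bought by letting agent $t$ straddle the cut. Separately, your base case is also shakier than it needs to be: the claim that one of your four two-cut patterns works does not follow from the intermediate value theorem applied to a single sliding knife (the relevant function need not change sign); the paper proves $f(2)\le 2$ by passing to the circle, tiling it with arcs of equal $\mu$-measure for rational $\alpha$, pigeonholing, and then taking a compactness limit. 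The $n=2$ statement you need is true, but you should prove it by such an averaging argument rather than IVT.
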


Again, it is clear that $n-1$ cuts are always necessary for $n$ agents, so this result is tight up to multiplicative constants. Our proof of Theorem~\ref{mainthm} is combinatorial as opposed to topological; an attractive byproduct of this approach is that the proof is constructive, yielding an effective procedure for disproportionate division.

While Theorem~\ref{mainthm} determines the optimal number of cuts for disproportionate division with $n$ agents up to multiplicative constants, the problem of pinning down this extremal number still remains. Unlike with fair division, it turns out that $n-1$ cuts do not always suffice; there is a construction demonstrating that $2n-2$ cuts may be necessary in general. We suspect that this construction, not Theorem~\ref{mainthm}, reflects the truth, and that the tightness of this construction should follow from topological considerations: to this end, we shall present a topological conjecture that implies that $2n-2$ cuts always suffice for disproportionate division with $n$ agents.

This paper is organised as follows. We give the proof of Theorem~\ref{mainthm} in Section~\ref{s:proof}. We conclude by discussing lower bounds, and by presenting a measure partitioning conjecture implying optimal bounds, in Section~\ref{s:conc}.

\section{Proof of the main result}\label{s:proof}
In this section, we prove our main result. It will be helpful to have some notation. As is usual, we write $[n]$ for the set $\{1, 2, \dots, n\}$. Let us also write $f(n)$ for the maximum number of cuts needed (over all possible measures and demands) for disproportionate division with $n$ agents; as noted earlier, $f(n)$ exists for all $n \in \N$ and we have the estimate $f(n) = O(n \log n)$. Recall that our result, in this language, asserts that $f(n) \le 3n - 4$ for all $n \ge 2$.

Before we turn to the proof, let us briefly highlight the main idea behind the proof. We shall proceed by induction on $n$. A natural first attempt is as follows. As in the sliding knife procedure, make $O(1)$ cuts to
\begin{itemize}
	\item partition the unit interval $[0,1]$ into two pieces $A$ and $B$, and
	\item find a suitable partition of the $n$ agents into two groups of size $a$ and $b$ with $a + b = n$,
\end{itemize}
so that we may recursively solve the division problem for the first group of size $a$ on $A$ and the second group of size $b$ on $B$. This would yield the estimate $f(n) \le f(a) + f(b) + O(1)$, which would in turn imply that $f(n) = O(n)$. While almost all the arguments in the literature (that we are aware of) follow this rough outline, we have however been unable to find a proof along these lines. Instead, it turns out to be quite helpful to allow ourselves a little more elbow room: once we have $A$ and $B$, instead of attempting to partition the $n$ agents into two groups (thereby generating two subproblems whose sizes add up to $n$), we shall allow some agents to participate in both subproblems (i.e., on both $A$ and $B$), thereby producing subproblems {whose sizes add up to more than $n$}; we shall then make up for this inefficiency by ensuring that these subproblems are both suitably smaller than $n$.

\begin{proof}[Proof of Theorem~\ref{mainthm}]
	First, a matter of convenience: since all our measures are non-atomic, open, half-open and closed intervals with the same endpoints all have the same measure in each of our measures, so we shall use these interchangeably as appropriate.

	Let us record two simple facts to start with. We trivially have $f(1) = 0$. Next, note that $f(2) \le 2$, which we may infer from the following fact: given two non-atomic probability measures $\mu$ and $\mu'$ on the unit circle $S^1$ and any $\alpha \in [0,1]$, there is an interval $X \subset S^1$ such that $\mu(X)  = \alpha$ and $\mu'(X) \ge \alpha$. To see this, note that, by compactness, it suffices to prove the claim for rational $\alpha$; indeed, the limit of a sequence of closed intervals with respect to the Hausdorff metric is a closed interval. Writing $\alpha = p/q$, we may find intervals $X_1, X_2, \dots, X_q$ so that $\mu(X_i) = \alpha$ for each $ 1\le i \le q$, and so each point of the circle is covered by exactly $p$ of these intervals; by pigeonholing, it is now clear that $\mu'(X_i) \ge \alpha$ for some $1\le i \le q$.

	With the above bounds for $f(1)$ and $f(2)$ in hand, the result easily follows, by induction, from the following estimate: for each $n \ge 2$,
	\begin{equation}\label{rec} f(n+1) \le \max\{1+f(n), \max_{2 \le k \le n} \{1+f(k) + f(n+2-k)\}\}. \tag{$\dagger$}\end{equation}

	We prove the above estimate as follows. Given measures $\mu_1, \mu_2, \dots, \mu_{n+1}$ and demands $\alpha_1, \alpha_2, \dots, \alpha_{n+1}$ with $n \ge 2$, define for $\vartheta \in [0,1]$ and $k \in [n+1]$, the set
	\[S(\vartheta,k) = \{i : \mu_i ([0,\vartheta]) > 1/2\} \cup \{i : \mu_i ([0,\vartheta]) = 1/2 \text{ and } i \le k\}.\]
	Now, fix a minimal $x \in [0,1]$ and then a minimal $t \in [n+1]$ so that
	\[ \sum_{i \in S(x,t)} \alpha_i \ge 1/2. \]

	Next, set $P = S(x,t) \setminus \{t\}$ and $Q = [n+1] \setminus S(x,t)$, whence $[n+1] = P \cup Q \cup \{t\} $. By the minimality of $x$ and $t$, note that $\mu_i([0,x]) \ge 1/2$ for all $i \in P$, $\mu_i([0,x]) \le 1/2$ for all $i \in Q$, and $\mu_t([0,x]) = 1/2$; furthermore, we also have $\sum_{i \in P} \alpha_i \le 1/2$ and $\sum_{i \in Q} \alpha_i \le 1/2$.

	If both $P$ and $Q$ are non-empty, then we are done. Indeed, set $\alpha' = 1/2 - \sum_{i \in P} \alpha_i$ and $\alpha'' = 1/2 - \sum_{i \in Q} \alpha_i$ so that $\alpha', \alpha'' \ge 0$ and $\alpha' + \alpha'' = \alpha_t$. Now, note that the division problem on $[0,x]$ for the measures in $P \cup \{t\}$, with the original demands $\alpha_i$ for each measure $\mu_i$ with $i \in P$, and demand $\alpha'$ for the measure $\mu_t$, is (after rescaling by constants) a smaller instance of the disproportionate division problem for $k = |P| + 1$ measures, with $2 \le k \le n$. Similarly, the division problem on $(x,1]$ for the measures in $Q \cup \{t\}$, with the original demands $\alpha_i$ for each measure $\mu_i$ with $i \in Q$, and demand $\alpha''$ for the measure $\mu_t$, is again a smaller instance of the disproportionate division problem for $n+2-k$ measures. By making a cut at $x$ and solving these two smaller division problems, we see that~\eqref{rec} holds in this case.

	Now, assume that one of $P$ or $Q$ is empty. We claim that it must have been that $\alpha_t \ge 1/2$. Indeed, if $P = \emptyset$, then $S(x,t) = \{t\}$ and the claim follows from the definition of $S(x,t)$, and if $Q = \emptyset$, then $\alpha_t = 1 - \sum_{i \in P} \alpha_i \ge 1-1/2 = 1/2$.

	If $\alpha_t = 1/2$, then we are again done. Indeed, if $P = \emptyset$, then assign the piece $[0,x]$ to $\mu_t$; having made one cut at $x$, we are now left with the division problem on $(x,1]$ for the $n$ measures $\mu_i$ with $i \in [n+1] \setminus \{t\}$, establishing~\eqref{rec}. If $Q = \emptyset$, then we proceed similarly by assigning the piece $(x,1]$ to $\mu_t$.

	We are left to address the case where $\alpha_t > 1/2$ and one of $P$ or $Q$ is empty. We now describe how to establish~\eqref{rec} when $P = \emptyset$; the other case may be handled analogously by exchanging the roles of $P$ and $Q$.

	Since $P = \emptyset$, note that $\mu_t([0,x]) = 1/2$ and $\mu_i ([0,x]) \le 1/2$ for each $i \ne t$. We know that $\alpha_t > 1/2$, so we may fix a minimal $y \in (x,1]$ such that $\mu_t([0,y]) = \alpha_t$, and define $U = \{i : \mu_i ([0,y]) \ge \alpha_t \text{ and } i \ne t\}$ and $V = \{i : \mu_i ([0,y]) < \alpha_t \text{ and } i \ne t\}$, so that $U$ and $V$ partition $Q = [n+1] \setminus \{t\}$.

	If $U = \emptyset$, we are done as before by cutting at $y$, assigning $[0,y]$ to $\mu_t$ and recursively solving the division problem on $(y,1]$ for the remaining measures. We may therefore assume that $U$ is non-empty. If $V$ is also non-empty, we may finish as before by cutting at $y$ and recursively solving two smaller division problems: the division problem on $[0,y]$ for the measures in $U \cup \{t\}$, with the original demands $\alpha_i$ for each measure $\mu_i$ with $i \in U$, and demand $\alpha_t - \sum_{i \in U} \alpha_i$ for the measure $\mu_t$, and the division problem on $(y,1]$ for the measures in $V \cup \{t\}$, with the original demands $\alpha_i$ for each measure $\mu_i$ with $i \in V$, and demand $1 - \alpha_t - \sum_{i \in V} \alpha_i$ for the measure $\mu_t$.

	Thus, we are left to deal with the case where $U = Q = [n+1] \setminus \{t\}$ and $V = \emptyset$. Recall that we have $P = \emptyset$. To summarise, we now know that
	$\mu_i([0,x]) \le 1/2$ and $\mu_i([0,y]) \ge \alpha_t$ for each $i \in U$, and that $\mu_t([0,x]) = 1/2$ and $\mu_t([0,y]) = \alpha_t$. Therefore, appealing to the intermediate value theorem, we may choose a maximal $z \in (x,y]$ such that $\mu_s([0,z]) = \mu_t([0,z]) = \beta$ for some $s \in U$; by the definition of $z$, we have $\mu_i([0,z]) \ge \beta$ for each $i \in U$. Notice that we have $1 - \beta \ge 1 - \alpha_t \ge \alpha_i$ for all $i \ne t$, where the first inequality follows from monotonicity, and the second from the fact that all the demands sum to $1$. Writing $U' = U \setminus \{s\}$, we may now establish~\eqref{rec} by cutting at $z$ and recursively solving two smaller division problems: the division problem on $[0,z]$ for the measures in $U' \cup \{t\}$, with the original demands $\alpha_i$ for each measure $\mu_i$ with $i \in U'$, and demand $\beta - \sum_{i \in U'} \alpha_i$ for the measure $\mu_t$ (which is a non-negative demand since $\beta \ge 1/2$ and $\sum_{i \in U'} \alpha_i \le \sum_{i \in U} \alpha_i \le 1/2$), and the division problem on $(z,1]$ for the measures in $\{s,t\}$, with the original demand $\alpha_s$ for the measure $\mu_s$, and demand $1 - \alpha_s - \beta$ for the measure $\mu_t$.

	We now see that we may establish the estimate~\eqref{rec} in each case; this completes the proof.
\end{proof}

\section{Conclusion}\label{s:conc}
We have shown that $f(n) \le 3n-4$ for all $n \ge 2$. While we trivially have $f(n) \ge n-1$ for $n \ge 2$, it is not hard to do better. The following construction from~\citep{halevi} shows that $f(n) \ge 2n-2$ for all $n \ge 2$. Let $\mu_1$ be the uniform measure on $[0,1]$ and for $1 \le i \le n-1$, let $\mu_{i+1}$ be the uniform measure supported on the (tiny) interval $[i/n - \eps , i/n + \eps]$, where $\eps = 1/(100n)^{10}$. For the demands $\alpha_1 = 1 - \eps^{10}$ and $\alpha_i = \eps^{10} / (n-1)$ for each $2 \le i \le n$, it is easily seen that any disproportionate division requires at least two cuts in the support of each of the measures $\mu_2, \mu_3, \dots, \mu_n$, for a total of $2n-2$ cuts.

We suspect that the above construction reflects the truth and that $f(n) = 2n-2$ for all $n \ge 2$. This conjecture is particularly interesting because we believe there are deeper topological considerations driving this claim. Concretely, the following measure partitioning conjecture, if true, would imply by a simple inductive argument that $f(n) = 2n-2$ for all $n \ge 2$.

\begin{conjecture}\label{partn}
	For any $n\ge2$ non-atomic probability measures $\mu_1, \mu_2, \dots, \mu_n$ on the unit circle $S^1$ and non-negative reals $\alpha_1, \alpha_2, \dots, \alpha_n$ summing to $1$, there exists a partition of the set $[n] = P \cup Q$ into two nonempty sets and a partition of the circle $S^1 = X \cup X^c$ into two intervals such that
	\[\min_{i \in P} \mu_i(X) = \sum_{j \in P} \alpha_j \text {  and  } \min_{i \in Q} \mu_i(X^c) = \sum_{j \in Q} \alpha_j. \]
\end{conjecture}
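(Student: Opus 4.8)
The plan is to attack the conjecture through a configuration-space/test-map scheme, using the complementation symmetry $X \mapsto X^c$ as the governing $\Z_2$-action. I would begin by reformulating the target condition in a form amenable to topology. Writing $\beta = \sum_{j\in P}\alpha_j$, the two equalities are equivalent to the existence of an arc $X$, a level $\beta$, and distinguished agents $s \in P$ and $t \in Q$ with $\mu_s(X) = \mu_t(X) = \beta$, such that $\mu_i(X) \ge \beta$ for all $i \in P$, $\mu_j(X) \le \beta$ for all $j \in Q$, and $\sum_{i\in P}\alpha_i = \beta$ (note that $\sum_{j \in Q}\alpha_j = 1 - \beta$ then follows automatically). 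Thus a solution is an arc carrying a \emph{balanced threshold}: a value $\beta$ at which the non-increasing step function $\Phi_X(\beta) = \sum_{i : \mu_i(X)\ge\beta}\alpha_i$ meets the diagonal, with \emph{two} agents pinned exactly at $\beta$ and the high-side demand summing exactly to $\beta$. The whole condition is invariant under $(P,Q,X) \mapsto (Q,P,X^c)$, which is the symmetry to exploit. The base case $n = 2$ is already provable: there $P = \{1\}$ forces us to find an arc with $\mu_1(X) = \mu_2(X) = \alpha_1$, and such an arc exists by reparametrising $S^1$ by the $\mu_1$-measure and applying the intermediate value theorem to $\theta \mapsto \mu_2([\theta, \theta+\alpha_1])$, whose average over $\theta$ equals $\alpha_1$ by Fubini.

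For the general case, the natural configuration space is the space $\mathcal A$ of arcs of $S^1$, parametrised by a midpoint and a half-length (with the degenerate empty and full arcs collapsing the two ends), so that $\mathcal A \cong S^2$. Complementation $X \mapsto X^c$ is a fixed-point-free involution of $\mathcal A$ --- it sends the midpoint to its antipode and the half-length $h$ to $1/2 - h$ --- and is therefore conjugate to the antipodal map on $S^2$. I would then attempt to build a continuous, $\Z_2$-equivariant test map $\Psi : \mathcal A \to \R^2$ (with $\Z_2$ acting antipodally on the target) whose zeros are exactly the arcs admitting a valid balanced threshold and splitting; Borsuk--Ulam then guarantees a zero of $\Psi$. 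The dimension count is encouraging: $\mathcal A$ is two-dimensional, and a solution is cut out by two independent scalar conditions --- that two agents sit at the threshold, and that the high-group demand hits $\beta$ exactly --- so one expects a topologically protected solution rather than an accidental one. Equivalently, one may set up a Tucker-type labelling of a fine $\Z_2$-symmetric triangulation of $S^2$, labelling each arc by the agent that is `most unhappy' and the side on which it is unhappy, and extract a complementarily labelled edge in the limit; this combinatorial Borsuk--Ulam is closer in spirit to the proof of Theorem~\ref{mainthm}.

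\textbf{The main obstacle} is that the data defining a solution are combinatorial and discontinuous, so encoding them in an honest continuous equivariant map is delicate. The balanced threshold $\beta(X)$ and the high/low assignment come from the step function $\Phi_X$, so the candidate coordinates of $\Psi$ jump as agents cross the threshold or swap order; moreover, hitting the high-side sum $\beta$ \emph{exactly} is a subset-sum condition on the demands of the at-threshold agents, which is discrete and need not be solvable for a fixed arc. The resolution I would pursue mirrors the compactness arguments used above: first establish the statement for demands and measures in general position (where the coincidence produced by Borsuk--Ulam forces precisely two agents to the threshold, one assignable to each side, with the residual slack in $\Phi_X$ absorbable), and then recover the general case by a limiting argument, using that Hausdorff limits of arcs are arcs and that the equality constraints are closed conditions. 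Simultaneously guaranteeing that the two forced tight agents land on opposite sides, that $P$ and $Q$ are both nonempty, and that the subset-sum lands exactly on $\beta$ is the crux, and is, I expect, exactly the point at which a naive $\Psi$ fails to be well defined --- hence the need for either a carefully regularised test map or a Tucker-style labelling robust to these degeneracies. Pinning this down rigorously is what keeps the statement a conjecture rather than a theorem.
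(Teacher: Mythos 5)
This statement is Conjecture~\ref{partn}, and the paper offers no proof of it: the authors explicitly state that it is open even for three measures, with only the $n=2$ case known (via Stromquist--Woodall). So there is no ``paper's proof'' to match, and your proposal must be judged on its own terms --- and on those terms it is a research plan, not a proof. Your reformulation of the conclusion (an arc $X$, a threshold $\beta$, two agents pinned at $\beta$ on opposite sides, all others separated by $\beta$, and $\sum_{i\in P}\alpha_i=\beta$) is correct, your $n=2$ argument via Fubini and the intermediate value theorem is sound and matches what is known, and the identification of the arc space $\mathcal{A}\cong S^2$ with complementation acting as a free involution is the natural equivariant setting. But the central object of the argument --- the continuous $\Z_2$-equivariant map $\Psi:\mathcal{A}\to\R^2$ whose zeros are solutions, or equivalently the Tucker-type labelling --- is never constructed, and you yourself identify this as the unresolved crux. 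A sketch whose key step is ``build a test map with the right zero set, modulo the fact that I do not know how to make it well defined'' does not establish the statement.

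The gap is not merely one of missing detail; there are concrete reasons to doubt that the proposed scheme closes as stated. First, the dimension count is misleading: the condition $\sum_{i\in P}\alpha_i=\beta$ is not a single scalar equation on $\mathcal{A}$ but a disjunction over the finitely many admissible subsets $P$ containing the high-side pinned agent, so the solution set is a union over exponentially many discrete choices of $(s,t,P)$, each cut out by two equations, and there is no a priori reason a single $\Z_2$-index or degree argument detects this union --- the relevant equivariant obstruction could vanish even though each individual branch is nonempty for topological reasons in the two-measure case. Second, even for a fixed arc $X$ in general position, the value $\beta$ forced by the coincidence $\mu_s(X)=\mu_t(X)$ need not equal any achievable subset sum of the demands, so the ``residual slack absorbable'' step has no justification; this is exactly the subset-sum rigidity that distinguishes the disproportionate problem from the proportionate one. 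Third, guaranteeing that the two pinned agents land on opposite sides of the threshold and that both $P$ and $Q$ are nonempty is an open condition that a naive Borsuk--Ulam coincidence does not control. Until a genuine continuous odd map (or a degeneracy-robust labelling) is exhibited and its zeros are shown to encode all of these constraints simultaneously, the statement remains a conjecture --- as the paper says, and as you concede.
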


The above conjecture for two measures is known to be true and is essentially the Stromquist--Woodall theorem~\citep{sw} for two measures, but we have unfortunately been unable to decide this conjecture even for three measures.
\section*{Acknowledgements}
The second author wishes to acknowledge support from NSF grant DMS-1800521, and the third author was supported by NSF grant DMS-1802201. We would like to thank Imre B\'ar\'any and D\"om\"ot\"or P\'alv\"olgyi for several interesting discussions around the measure partitioning conjecture presented here.

\bibliographystyle{amsplain}
\bibliography{disprop_div}

\end{document}